\newtheorem{thm}{Theorem}[section]
\newtheorem{cor}[thm]{Corollary}
\newtheorem{lem}[thm]{Lemma}
\newtheorem{prop}[thm]{Proposition}
\theoremstyle{definition}
\newtheorem{defin}[thm]{Definition}
\theoremstyle{definition}
\newtheorem{exm}[thm]{Example}
\theoremstyle{remark}
\newcommand{\D}{{\delta}}
\newcommand{\R}{{\mathbb R}}
\newcommand{\C}{{\mathbb C}}
\newcommand{\Z}{{\mathbb Z}}
\begin{document}

\title{On decomposing suspensions of simplicial spaces}

\author[A.~Adem]{A.~Adem}
\address{Department of Mathematics
University of British Columbia, Vancouver, B.~C., Canada}
\email{adem@math.ubc.ca}

\author[A.~Bahri]{A.~Bahri}
\address{Department of Mathematics,
Rider University, Lawrenceville, NJ 08648, U.S.A.}
\email{bahri@rider.edu}

\author[M.~Bendersky]{M.~Bendersky}
\address{Department of Mathematics
CUNY,  East 695 Park Avenue New York, NY 10065, U.S.A.}
\email{mbenders@xena.hunter.cuny.edu}

\author[F.~R.~Cohen]{F.~R.~Cohen}
\address{Department of Mathematics,
University of Rochester, Rochester, NY 14625, U.S.A.}
\email{cohf@math.rochester.edu}

\author[S.~Gitler]{S.~Gitler}
\address{Department of Mathematics,
Cinvestav, San Pedro Zacatenco, Mexico, D.F. CP 07360 Apartado
Postal 14-740, Mexico} \email{sgitler@math.cinvestav.mx}

\thanks{A.Adem was partially supported by the NSF and NSERC. A.Bahri was
partially supported by the award of a research leave from Rider
University. F.Cohen was partially supported by NSF grant number
0340575 and DARPA grant number 2006-06918-01. S.Gitler was partially
supported by the Department of Mathematics at Princeton University.}

\date{\today}

\begin{abstract}
Let $X_{\bullet}$ denote a simplicial space. The purpose of this
note is to record a decomposition of the suspension of the
individual spaces $X_n$ occurring in $X_{\bullet}$ in case the
spaces $X_n$ satisfy certain mild topological hypotheses and where
these decompositions are natural for morphisms of simplicial spaces.
In addition, the summands of $X_n$ which occur after one suspension
are stably equivalent to choices of filtration quotients of the
geometric realization $|X_{\bullet}|$. The purpose of recording
these decompositions is that they imply decompositions of the single
suspension of certain spaces of representations \cite{ac,acg} as
well as other varieties and are similar to decompositions of
suspensions of moment-angle complexes
\cite{bahri.bendersky.cohen.gitler} which appear in a different
context.

\end{abstract}

\maketitle

\section{Introduction and Statement of Results}\label{sec:Introduction}

Let $X_{\bullet}$ denote a simplicial space. The purpose of this
note is to give a decomposition of the suspension of the individual
spaces $X_n$ occurring in $X_{\bullet}$ in case the spaces $X_n$
satisfy certain mild topological hypotheses. These decompositions
are natural for morphisms of simplicial spaces. In addition, the
summands of $X_n$ which occur after one suspension are stably
equivalent to choices of filtration quotients of the geometric
realization $|X_{\bullet}|$.

These structures occur in several contexts in useful ways and the
following spaces admit decompositions of the type
discussed above.
\begin{enumerate}
\item The suspension of the the loop space for a (path-connected) suspension of a CW-complex $Y$
  is homotopy equivalent to a bouquet of the suspension of smash products of $Y$ \cite{james,milnor}.

\item Spaces of ordered commuting $n$-tuples in a Lie
group $G$, $Hom(\oplus_n~ \mathbb Z,G)$, assemble to
give a simplicial space
denoted $Hom( \mathbb Z_{\bullet},G)$. 
If $G$ is a closed subgroup of $GL_r(\C)$,
there are natural homotopy equivalences

$$\Sigma  Hom(\oplus_n~ \mathbb Z,G)\to
\bigvee_{1 \leq k \leq n}\Sigma  \bigvee^{\binom n k}
Hom(\oplus_k~ \mathbb Z,G)/ S_k(G)$$
where $S_k(G)$ 
denotes the \textsl{singular subspace} defined as those
commuting $k$--tuples where at least one entry is equal
to $1$ (see \cite{ac}).
The associated spaces of
representations $$Rep(\oplus_n~ \mathbb Z,G) = Hom(\oplus_n~\mathbb
Z,G)/G^{ad}$$ where $G$ acts by conjugation also assemble into a
simplicial space, with similar decompositions (see \cite{acgo}). 
For $G$ a finite group, the simplicial spaces $Hom( \mathbb
Z_{\bullet},G)$ and $Rep( \mathbb Z_{\bullet},G)$ have natural
connections with the cohomology of finite groups
\cite{acg}.

\item The suspension of moment-angle complexes as well as their generalizations are
homotopy equivalent to a bouquet of \textsl{smash product moment-angle
complexes} (see \cite{bahri.bendersky.cohen.gitler}).

\item A compact real algebraic variety as given in \S 5
is homeomorphic to the geometric realization of a simplicial space
$V_{\bullet}$ for which each space $V_n$ decomposes after a single
suspension.
\end{enumerate}

It is the purpose of this note to show that many of these
decompositions carry over into the context of simplicial spaces
which satisfy a mild cofibration condition and to put these in a
coherent picture. Recall that a simplicial space $X_{\bullet}$ is a
set of topological spaces $X_n$, $n \geq 0$, together with
continuous maps $d_i:X_n \to X_{n-1}$ and  $s_j:X_n \to X_{n+1}$
which satisfy the simplicial identities. A natural filtration of
each space $X_n$ is defined next.

\begin{defin}\label{defin:good.simplicial.spaces}
Define subspaces $S^t(X_n) = \cup s_{i_1}s_{i_2}\cdots s_{i_t}(X_{n-t})\subset X_n$
with $S^0(X_n) = X_n$ and $S^1(X_n)=S(X)$ (for notational convenience). 
This defines a natural \textsl{decreasing filtration}
for the spaces $X_n$ in a simplicial space
$X_{\bullet}$, where
$$s_0^n(X_0) = S^{n}(X_{n}) \subset  \cdots \subset S^t(X_{n})\subset \cdots 
\subset S(X_{n})  \subset S^{0}(X_{n}) = X_n$$
and $S^{n+1}(X_n)$ is empty by convention.
\end{defin}

The following concepts appear in \cite{may}, Definition 11.2.

\begin{defin}
A pair of spaces $(X,A)$ is said to be
a \textsl{strong NDR pair} 
provided that there are maps $u:X
\to [0,1]$ and a homotopy $h:X \times [0,1] \to X$ such that
$(X,A)$ is an NDR pair, namely
\begin{enumerate}
  \item   $A=u^{-1}(0)$,
  \item  $h(0,x)=x$ for all $x \in X$,
  \item $h(a,t)\in A$   for all $(a,t) \in A \times[0,1]$,
  \item if $u(x)<1$ then $h(x,1)\in A$.
\end{enumerate} and if $u(x)<1$ then $u(h(x,t))<1$.
\end{defin}

\begin{defin}
A simplicial space $X_{\bullet}$ is said to be \textsl{proper}
if each pair
$(X_{n},S(X_{n}))$ is a strong NDR-pair for all $n$.
\end{defin}

\begin{defin}
A simplicial space $X_{\bullet}$ is said to be \textsl{simplicially
NDR} if each  
$$(S^{t-1}(X_n), S^t(X_n))$$ 
is an NDR pair for
all $t-1\ge 0$ and all $n$.
\end{defin}

Note that 
every degenerate element $x$ in $X_n$ has a unique decomposition as
$$x = s_{j_r}s_{j_{r-1}}\cdots s_{j_1}(y)$$ where $y$ is  in $X_{n-r}$ with $y$ non-degenerate and
$ j_r > j_{r-1}> \cdots > j_1$. Given any sequence $I=
(i_r,i_{r-1}\cdots, i_1)$, write $s_I(X_{n-r}) =
s_{i_r}s_{i_{r-1}}\cdots s_{i_1}(X_{n-r})$ with $|I| = r$. 

\begin{defin}
The
sequence $I= (i_r,i_{r-1}\cdots, i_1)$ is said to be \textsl{admissible}
provided $i_r > i_{r-1} > \cdots > i_1$. In case $I$ is admissible,
define $\widehat{s_I(X_{n-r})} = s_I(X_{n-r})/s_IS(X_{n-r})$.
\end{defin}

The point-set topological properties of $X_{\bullet}$ are basic in
these results. One instance is illustrated by the natural
inclusion $\iota:S(X_{n}) \to X_{n}$ with mapping cone denoted
$K(\iota)$. The proof of the main Theorem \ref{thm:splittings}
implies the suspension of $K(\iota)$ is a retract of the suspension
$\Sigma(X_n)$. On the other-hand, the quotient space $X_n/S(X_{n})$
sometimes has independent useful features such as the case in
\cite{ac} where these spaces are sometimes identified as natural
Spanier-Whitehead duals of certain choices of Lie groups. To ensure
that the properties of $X_n/S(X_{n})$ are reflected in the structure
of Theorem \ref{thm:splittings}, it is useful to know that the
inclusion $S(X_{n}) \to X_{n}$ is a cofibration. 

The precise point-set topology for $|X_{\bullet}|$ admits several
natural choices. Milnor originally topologized $|X_{\bullet}|$ by
the natural quotient topology \cite{milnor2}. Milgram topologized
$BG$, the geometric realization of a simplicial space similarly
\cite{milgram}. Subsequently, Steenrod topologized $BG$ by the
natural compactly generated topology \cite{St}. Finally, May
topologized $|X_{\bullet}|$ with the natural compactly generated,
weak Hausdorff topology which is both elegant and convenient.
This topology is used throughout the current article.

\begin{thm}\label{thm:splittings}
Assume that the simplicial space $X_{\bullet}$ is
simplicially NDR. Then the
spaces $X_n$ in the simplicial space $X_{\bullet}$ are naturally
filtered where
$$s_0^n(X_0) = S^{n}(X_{n}) \subset  \cdots  S^r(X_{n}) \subset \cdots  \subset S(X_{n}) \subset S^{0}(X_{n}) = X_n.$$
Furthermore, these filtrations are split up to homotopy after
suspending once. Thus there are homotopy equivalences which are
natural for morphisms of simplicial spaces
\begin{enumerate}
\item $$\Theta(n): \Sigma(X_n) \longrightarrow  \bigvee_{0 \leq r \leq n} \Sigma(S^r(X_{n})/S^{r+1}(X_{n})),$$

\item $$H(n): \Sigma(X_n) \longrightarrow  \bigvee_{0 \leq r \leq n}\bigvee_{J} \Sigma(\widehat{s_J(X_{n-r})})$$
where $$J= (j_r,j_{r-1}\cdots, j_1)$$ is admissible with $|J| =
r$ and $ 0 \leq r \leq n$ and

\item the map $H(n)$ restricts to a homotopy equivalence
$${H(n)|}_t: \Sigma(S^t(X_n)) \longrightarrow  \bigvee_{t \leq r \leq n}\bigvee_{J} \Sigma(\widehat{s_{J}(X_{n-r})}).$$
\end{enumerate}
\end{thm}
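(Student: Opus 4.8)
\emph{Plan of proof.} Throughout I would work in May's category of compactly generated weak Hausdorff spaces, so that every subspace $s_I(X_{n-r})$ is closed, the unions defining $S^r(X_n)$ are realized as honest pushouts of the pieces $s_J(X_{n-r})$ along their intersections, and — using the hypothesis that $X_\bullet$ is simplicially NDR — each inclusion $S^{t}(X_n)\hookrightarrow S^{t-1}(X_n)$ is a cofibration, as is $S(X_n)\hookrightarrow X_n$ (this is the case $t=1$). The first reduction is combinatorial: from the uniqueness of the normal form $x=s_{j_r}\cdots s_{j_1}(y)$ with $y$ non-degenerate and $j_r>\cdots>j_1$ one gets, for admissible $J$ with $|J|=r$, the identities $s_J(X_{n-r})\cap S^{r+1}(X_n)=s_J\bigl(S(X_{n-r})\bigr)$ and $s_J(X_{n-r})\cap s_{J'}(X_{n-r})\subseteq S^{r+1}(X_n)$ for $J\neq J'$, hence a homeomorphism of pointed spaces
\[
S^{r}(X_n)/S^{r+1}(X_n)\ \cong\ \bigvee_{|J|=r}\widehat{s_J(X_{n-r})}.
\]
This matches the right–hand sides of (1) and (2), so it suffices to produce, naturally in $X_\bullet$, homotopy equivalences $\Sigma S^{t}(X_n)\simeq\bigvee_{t\le r\le n}\Sigma\bigl(S^{r}(X_n)/S^{r+1}(X_n)\bigr)$ for all $t$; then (2) follows from the displayed homeomorphism, (1) is the case $t=0$, and (3) is the corresponding statement itself.

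The engine is the elementary remark that if $A\hookrightarrow X$ is a cofibration and either $A$ is a (homotopy) retract of $X$ or $X\to X/A$ admits a section after one suspension, then $\Sigma X\simeq \Sigma A\vee\Sigma(X/A)$ by a map natural in the data — namely the pinch map $\Sigma X\to\Sigma X\vee\Sigma X$ followed by $\Sigma(\mathrm{retraction})\vee\Sigma(\mathrm{quotient})$, or its evident analogue on the section side. I would carry out a double induction: an outer induction on $n$, and for fixed $n$ a downward induction on $t$ (starting from the trivial cases $t=n+1$ and $t=n$, where $S^{n+1}(X_n)=\varnothing$ and $S^{n}(X_n)=s_0^{\,n}(X_0)$). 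For the inductive step $t+1\mapsto t$ with $t\ge 1$, apply the remark with $A=S^{t+1}(X_n)$ and $X=S^{t}(X_n)$: the degeneracy $s_J\colon X_{n-t}\xrightarrow{\ \cong\ }s_J(X_{n-t})\subseteq S^{t}(X_n)$ is a closed embedding carrying $S(X_{n-t})$ onto $s_J(X_{n-t})\cap S^{t+1}(X_n)$, so the outer inductive hypothesis for $X_{n-t}$ (legitimate since $n-t<n$), which splits $\Sigma\bigl(X_{n-t}/S(X_{n-t})\bigr)$ off $\Sigma X_{n-t}$, provides a section of $\Sigma X_{n-t}\to\Sigma\bigl(X_{n-t}/S(X_{n-t})\bigr)$; transporting it along $s_J$ and assembling over all admissible $J$ with $|J|=t$ gives a section of $\Sigma S^{t}(X_n)\to\Sigma\bigl(S^{t}(X_n)/S^{t+1}(X_n)\bigr)$. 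The remark then peels off this quotient, and the inner inductive hypothesis for $S^{t+1}(X_n)$ finishes the step. Naturality is preserved because every map used is built from face and degeneracy operators and from the natural cofibre–sequence splittings above.

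The one genuinely delicate point is the bottom step $t=0$, i.e. $\Sigma X_n\simeq\Sigma S(X_n)\vee\Sigma\bigl(X_n/S(X_n)\bigr)$: here no degeneracy retracts $X_n$ onto $S(X_n)$, and the outer induction is unavailable because the relevant index is $n$. For this I would bring in all the face operators at once. For each admissible $J$ with $|J|=r$ choose a composite $r_J\colon X_n\to X_{n-r}$ of $r$ face operators with $r_J s_J=\mathrm{id}$, taken consistently ``to one side'' (deleting increasing indices) so that $r_J s_K\in S(X_{n-r})$ whenever $K\neq J$ with $|K|=r$ is later than $J$ in the chosen ordering; form $\Phi=(q'_{n-r}\circ r_J)_J\colon X_n\to\prod_J \widehat{s_J(X_{n-r})}$ with $q'_{n-r}$ the quotient maps, suspend, and project along the natural retraction $\Sigma\prod_J Y_J\to\bigvee_J\Sigma Y_J$ onto the linear summand to obtain a natural map $H(n)\colon\Sigma X_n\to\bigvee_J\Sigma\widehat{s_J(X_{n-r})}$, with $\Theta(n)$ its image under the homeomorphism of the first paragraph. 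Since $r_J$ is a composite of $r$ faces it carries $S^{r'}(X_n)$ into $S(X_{n-r})$ for $r'>r$, so on homology $H(n)$ is block–lower–triangular in the filtration degree $r$; within each degree it is triangular by the one–sided choice of the $r_J$, with identity on the diagonal because $q'_{n-r}\circ r_J\circ s_J$ is exactly the quotient $s_J(X_{n-r})\to\widehat{s_J(X_{n-r})}$; and an induction on $n$ (using the already established splittings of $\Sigma X_{n-r}$ together with a Mayer–Vietoris bookkeeping for $S(X_n)=\bigcup_i s_i(X_{n-1})$) shows that the filtration on $\widetilde H_*(X_n)$ splits with the expected associated graded, whence $H(n)$ is a homology isomorphism and therefore the desired natural equivalence; restricting the same argument to $S^t(X_n)$ gives (3). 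The main obstacle is precisely this last step — arranging the face–operator retractions so that $H(n)$ is \emph{forced} to be an equivalence (this is the simplicial analogue of the stable splitting of a product and of the moment–angle computations of \cite{bahri.bendersky.cohen.gitler}) — together with keeping the point–set hypotheses tight enough that every quotient, pushout and cofibre sequence invoked above is well behaved.
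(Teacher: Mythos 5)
The decisive part of your argument coincides with the paper's. Your one-sided retractions $r_J$ are exactly the composites $d_{\chi(J)}$ of \S 3, your triangularity claim ($r_Js_K$ degenerate for $K$ later than $J$) is Lemma \ref{lem:upper.triangular.d.I.s.J}, and your map $\Phi$ followed by suspension and projection onto the linear wedge summand is the map $H(n)$ of Definition \ref{defin:H(n)} (the paper phrases the projection as the sum of the maps $\lambda(n,J)=\bar\sigma_J\circ P_{\chi(J)}\circ\Sigma(\D(r))$ using the co-H structure, which is the same thing). Where you differ is the surrounding architecture, and there the paper is leaner: it needs neither your outer induction on $n$ nor the degeneracy-induced sections in your step $t+1\mapsto t$ for $t\ge 1$. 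The single map $H(n)$ is filtration-preserving (Lemma \ref{lem:Filtrations.and.D}); on each quotient $S^t(X_n)/S^{t+1}(X_n)\cong\bigvee_{|J|=t}\widehat{s_J(X_{n-t})}$ (the homeomorphism $\gamma(n,t)$ of Lemma \ref{lem:wedges}) it induces a map of wedges of suspensions that is triangular with identity diagonal, hence an equivalence; and a downward induction on $t$ through the morphisms of cofibrations in Lemma \ref{lem:filtration.preserving.H} then gives (1)--(3) simultaneously. If you do keep your inductive step for $t\ge 1$, you must also carry along the statement that the inductive equivalence for $X_{n-t}$ has top component the natural quotient map $\Sigma X_{n-t}\to\Sigma(X_{n-t}/S(X_{n-t}))$; otherwise the map you transport along $s_J$ is not actually a section.

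One step should be repaired rather than left as stated: the final passage ``homology isomorphism and therefore the desired natural equivalence.'' The theorem assumes no path-connectedness and no CW structure, and $\Sigma X_n$ is simply connected only when $X_n$ is path-connected, so a homology equivalence between these suspensions does not automatically upgrade to a homotopy equivalence. You do not need homology at all: the triangularity you already have shows that the induced map $\widehat{H(n)|_t}$ is a self-map of a wedge of suspensions whose matrix of homotopy classes is triangular with identity (homeomorphism) diagonal entries and trivial entries on one side, hence invertible using the co-group structure of the suspension; feeding this into the downward induction on $t$, with the gluing lemma applied to the cofibrations supplied by the simplicially NDR hypothesis (Lemma \ref{lem:wedges}(1)), yields honest natural homotopy equivalences. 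This is precisely how the paper's Lemma \ref{lem:filtration.preserving.H} is meant to be verified, and it avoids the connectivity issue entirely.
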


\noindent\textbf{Remarks:}\label{remark:natural.transformation}

\noindent (1) The splitting maps above in Theorem \ref{thm:splittings}
are induced by the natural transformation from the identity to the
decomposition maps $\Theta(n)$ regarded as functors from simplicial
spaces to spaces.

\noindent (2) The finer decompositions obtained using the maps $H(n)$
arise from spaces
$\widehat{s_{J_t}(X_{n-t})}$ with fixed $t = |J_t|$. In case $t$ is
fixed, the spaces $\widehat{s_{J_t}(X_{n-t})}$ are homeomorphic, but
not equal.

\noindent (3) The notation in the proof and statement of 
Theorem \ref{thm:splittings} simplifies
considerably if for fixed $t$, these differences of the
$\widehat{s_{J_t}(X_{n-t})}$ are not addressed. Since the proof that
moment-angle complexes admit stable decompositions in
\cite{bahri.bendersky.cohen.gitler} uses an analogous proof which
keeps track of these differences, the more technically complicated
statement as well as proof are retained here.

The following was proved by J.~P.~May as Lemma 11.3 \cite{may}.
\begin{prop}\label{prop:quotients.in.realization}
Assume that the simplicial space $X_{\bullet}$ is proper. 
Then the geometric realization
$|X_{\bullet}|$ is naturally filtered by $F_j|X_{\bullet}|$ with
induced homeomorphisms
$$\Sigma^{j}(X_j/S(X_j)) \to F_j|X_{\bullet}|/F_{j-1}|X_{\bullet}|.$$
\end{prop}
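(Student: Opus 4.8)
The plan is to recover May's Lemma 11.3 by exhibiting each successive skeleton of $|X_{\bullet}|$ as a pushout and then recognizing the resulting cofiber as an iterated suspension. Recall that $|X_{\bullet}|$ is the quotient of $\coprod_{n\ge 0}X_n\times\Delta^n$ by the relations $(x,\delta^i u)\sim(d_i x,u)$ and $(x,\sigma^j u)\sim(s_j x,u)$ induced by the cosimplicial object of standard simplices, formed in the compactly generated weak Hausdorff category, and that $F_j|X_{\bullet}|$ is the image of $\coprod_{n\le j}X_n\times\Delta^n$. The first observation is purely combinatorial: a point of $X_j\times\Delta^j$ already lies in $F_{j-1}|X_{\bullet}|$ exactly when it belongs to $(S(X_j)\times\Delta^j)\cup(X_j\times\partial\Delta^j)$, since a degenerate simplex $s_i y\in S(X_j)$ paired with $u$ is identified with $(y,\sigma^i u)$ and a pair $(x,\delta^i v)$ is identified with $(d_i x,v)$; and, using the standard normal-form statement that every point of $|X_{\bullet}|$ has a unique representative $(x,u)$ with $x$ nondegenerate and $u$ interior, the remaining points of $X_j\times\Delta^j$ inject into $F_j|X_{\bullet}|\setminus F_{j-1}|X_{\bullet}|$.

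With this in hand, I would establish the pushout square
$$
\begin{CD}
(S(X_j)\times\Delta^j)\cup(X_j\times\partial\Delta^j) @>>> X_j\times\Delta^j\\
@VVV @VVV\\
F_{j-1}|X_{\bullet}| @>>> F_j|X_{\bullet}|
\end{CD}
$$
of spaces, the upper-left corner carrying the subspace topology from $X_j\times\Delta^j$. That it is a pushout of underlying sets is the bookkeeping just described; that it is a pushout in the compactly generated weak Hausdorff category — and that its upper-left corner is the genuine gluing $(S(X_j)\times\Delta^j)\cup_{S(X_j)\times\partial\Delta^j}(X_j\times\partial\Delta^j)$ rather than a formal pushout with a finer topology — is exactly where properness is used. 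Since $(X_j,S(X_j))$ is a strong NDR pair it is in particular an NDR pair, so $S(X_j)\hookrightarrow X_j$ is a closed cofibration; combined with the NDR pair $(\Delta^j,\partial\Delta^j)$ via the standard product construction for NDR pairs, the top horizontal inclusion is a closed cofibration, whence $F_{j-1}|X_{\bullet}|\hookrightarrow F_j|X_{\bullet}|$ is as well and the cofiber is computed by the naive quotient:
$$
F_j|X_{\bullet}|/F_{j-1}|X_{\bullet}| \;\cong\; (X_j\times\Delta^j)\big/\bigl((S(X_j)\times\Delta^j)\cup(X_j\times\partial\Delta^j)\bigr).
$$

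To finish I would identify the right-hand side with a smash product. In the compactly generated weak Hausdorff category $-\times Z$ preserves quotient maps, so $X_j\times\Delta^j\to(X_j/S(X_j))\times(\Delta^j/\partial\Delta^j)$ is again a quotient map, and one obtains the natural homeomorphism $(A\times B)/\bigl((A'\times B)\cup(A\times B')\bigr)\cong(A/A')\wedge(B/B')$. Applying this with $(A,A')=(X_j,S(X_j))$ and $(B,B')=(\Delta^j,\partial\Delta^j)$, and using the standard homeomorphism $\Delta^j/\partial\Delta^j\cong S^j$, produces
$$
F_j|X_{\bullet}|/F_{j-1}|X_{\bullet}| \;\cong\; (X_j/S(X_j))\wedge S^j \;=\; \Sigma^j(X_j/S(X_j)).
$$
Each ingredient — the skeletal pushout, the passage to quotients, and the smash identification — is built only from the faces, degeneracies, and structure maps of $X_{\bullet}$, so the composite homeomorphism is natural for morphisms of simplicial spaces, which is the assertion.

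The step I expect to be the main obstacle is the point-set topology in the middle paragraph: verifying, within the compactly generated weak Hausdorff setting, that the skeletal square is genuinely a pushout of topological spaces (not merely of sets) and that the pertinent product inclusion is a closed cofibration, so that the filtration quotient may legitimately be taken as the naive quotient of $X_j\times\Delta^j$ and the smash decomposition applied. This is precisely the content that properness — the strong NDR hypothesis on each pair $(X_n,S(X_n))$ — is designed to supply; granting it, everything else is a formal manipulation of quotients.
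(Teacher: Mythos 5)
Your argument is correct, but note that the paper itself gives no proof of this proposition: it is quoted verbatim from May (\emph{The geometry of iterated loop spaces}, Lemma 11.3), and the paper simply cites that reference. What you have written is essentially a reconstruction of May's argument: the skeletal description of $|X_{\bullet}|$ via the square
$$(S(X_j)\times\Delta^j)\cup(X_j\times\partial\Delta^j)\to X_j\times\Delta^j \quad\Rightarrow\quad F_{j-1}|X_{\bullet}|\to F_j|X_{\bullet}|,$$
the normal-form lemma (unique representative $(x,u)$ with $x$ nondegenerate, $u$ interior), and the identification $(A\times B)/\bigl((A'\times B)\cup(A\times B')\bigr)\cong(A/A')\wedge(B/B')$ together with $\Delta^j/\partial\Delta^j\cong S^j$. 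The only point where your accounting is slightly off is the role of properness. The fact that the square is a pushout and that $F_j|X_{\bullet}|/F_{j-1}|X_{\bullet}|$ is the naive quotient of $X_j\times\Delta^j$ by the indicated subspace does not require properness at all: since $F_j|X_{\bullet}|$ carries the quotient topology from $\coprod_{n\le j}X_n\times\Delta^n$ and all components with $n<j$ land in $F_{j-1}|X_{\bullet}|$, the composite $X_j\times\Delta^j\to F_j|X_{\bullet}|/F_{j-1}|X_{\bullet}|$ is automatically a quotient map, and the normal-form lemma shows the only identifications are the collapse of $(S(X_j)\times\Delta^j)\cup(X_j\times\partial\Delta^j)$; the smash identification then only needs local compactness of $\Delta^j$ (or the compactly generated product/quotient fact). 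What properness genuinely buys — and what May's Lemma 11.3 also asserts and the rest of the paper uses — is that $S(X_j)\to X_j$, hence the top inclusion of your square and hence $F_{j-1}|X_{\bullet}|\to F_j|X_{\bullet}|$, are (closed) cofibrations, so the filtration is homotopically well behaved. This is a clarification of emphasis, not a gap; your proof stands.
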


The next corollary follows from Theorem \ref{thm:splittings} and
Proposition \ref{prop:quotients.in.realization}. Notice that
Corollary \ref{cor:finer.splitting.quotients.realization} implies
that the stable summands in Theorem \ref{thm:splittings} are all
given in terms of the filtration quotients
$F_j|X_{\bullet}|/F_{j-1}|X_{\bullet}|$. In addition, the natural
$d^1$-differential in homology arising from the natural spectral
sequence first investigated by G.~Segal \cite{segal} then admits a
geometric interpretation in terms of this decomposition, a point not
developed here.

\begin{cor}\label{cor:finer.splitting.quotients.realization}
Assume that the simplicial space $X_{\bullet}$ is proper and simplicially
NDR. 
Then there are natural homotopy
equivalences $$K(n,t): \Sigma^{n+1}(S^t(X_n)/S^{t+1}(X_n)) 
\longrightarrow
\bigvee_{J_t}\Sigma^{t+1}(F_{n-t}|X_{\bullet}|/F_{n-t-1}|X_{\bullet}|)$$ 
where $J_t$
runs over all admissible sequences with $t = |J_t|$
and $t$ is a fixed integer such that $0\le t\le n$. Thus by Theorem
\ref{thm:splittings}, there are natural homotopy equivalences
$$\Theta(n): \Sigma^{n+1}(X_n) 
\longrightarrow \bigvee_{0 \leq t \leq n}
\bigvee_{J_t }\Sigma^{t+1}
(F_{n-t}|X_{\bullet}|/F_{n-t-1}|X_{\bullet}|)$$ 
where $J_t$ runs over
all admissible sequences with $t = |J_t|$.

\end{cor}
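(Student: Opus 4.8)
The plan is to deduce the corollary by splicing together Theorem \ref{thm:splittings} and Proposition \ref{prop:quotients.in.realization}, the bridge being the elementary observation that each iterated degeneracy $s_{J_t}\colon X_{n-t}\to X_n$ is a homeomorphism onto its image, so that
$$\widehat{s_{J_t}(X_{n-t})}\;=\;s_{J_t}(X_{n-t})\big/s_{J_t}S(X_{n-t})\;\cong\;X_{n-t}\big/S(X_{n-t}),$$
naturally in $X_{\bullet}$.

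First I would fix $t$ with $0\le t\le n$ and compare the restricted homotopy equivalences of Theorem \ref{thm:splittings}(3) at the two consecutive filtration stages. Since $X_{\bullet}$ is simplicially NDR the inclusion $S^{t+1}(X_n)\hookrightarrow S^{t}(X_n)$ is a cofibration, and $H(n)|_{t+1}$ is literally the restriction of $H(n)|_{t}$ along $\Sigma S^{t+1}(X_n)\hookrightarrow\Sigma S^{t}(X_n)$; hence $H(n)|_t$ carries this inclusion to the inclusion of the subwedge indexed by $r\ge t+1$ into the wedge indexed by $r\ge t$. Taking cofibers of these two horizontal inclusions and using that $H(n)|_t$ and $H(n)|_{t+1}$ are homotopy equivalences produces a homotopy equivalence, natural in $X_{\bullet}$,
$$\Sigma\big(S^{t}(X_n)/S^{t+1}(X_n)\big)\;\longrightarrow\;\bigvee_{J_t}\Sigma\big(\widehat{s_{J_t}(X_{n-t})}\big),$$
where $J_t$ runs over admissible sequences with $|J_t|=t$; here I use $\Sigma(A/B)\cong\Sigma A/\Sigma B$ for a cofibration pair $(A,B)$.

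Next I would apply $\Sigma^{n}$ and rewrite $n+1=(n-t)+(t+1)$. Each summand on the right becomes $\Sigma^{t+1}\Sigma^{n-t}\big(\widehat{s_{J_t}(X_{n-t})}\big)$, which by the homeomorphism of the first paragraph equals $\Sigma^{t+1}\Sigma^{n-t}\big(X_{n-t}/S(X_{n-t})\big)$; since $X_{\bullet}$ is proper, Proposition \ref{prop:quotients.in.realization} with $j=n-t$ identifies $\Sigma^{n-t}\big(X_{n-t}/S(X_{n-t})\big)$ with $F_{n-t}|X_{\bullet}|/F_{n-t-1}|X_{\bullet}|$, again naturally. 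This gives the desired natural homotopy equivalence
$$K(n,t)\colon\Sigma^{n+1}\big(S^{t}(X_n)/S^{t+1}(X_n)\big)\;\longrightarrow\;\bigvee_{J_t}\Sigma^{t+1}\big(F_{n-t}|X_{\bullet}|/F_{n-t-1}|X_{\bullet}|\big).$$
Finally, suspending the splitting $\Theta(n)$ of Theorem \ref{thm:splittings}(1) $n$ further times and inserting $K(n,t)$ for each $0\le t\le n$ yields
$$\Sigma^{n+1}(X_n)\;\simeq\;\bigvee_{0\le t\le n}\Sigma^{n+1}\big(S^{t}(X_n)/S^{t+1}(X_n)\big)\;\simeq\;\bigvee_{0\le t\le n}\bigvee_{J_t}\Sigma^{t+1}\big(F_{n-t}|X_{\bullet}|/F_{n-t-1}|X_{\bullet}|\big),$$
with naturality inherited at each step.

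The bookkeeping with cofibration pairs ($\Sigma(A/B)\cong\Sigma A/\Sigma B$, and the homotopy invariance of the cofiber in a map of cofiber sequences) and the fact that $s_{J_t}$ is an embedding in the compactly generated weak Hausdorff category under the standing NDR hypotheses are routine. The one point I expect to need genuine care is checking that the equivalence $\Sigma(S^{t}(X_n)/S^{t+1}(X_n))\simeq\bigvee_{J_t}\Sigma(\widehat{s_{J_t}(X_{n-t})})$ extracted from Theorem \ref{thm:splittings}(3) fits together, as $t$ varies, with the global splitting $\Theta(n)$ of part (1) — that is, that the maps $K(n,t)$ genuinely reassemble the $(n+1)$-fold suspension of $\Theta(n)$ — which is precisely what the naturality of the splitting maps recorded in the Remarks following Theorem \ref{thm:splittings} is designed to supply.
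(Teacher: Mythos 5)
Your proposal is correct and follows essentially the paper's own route: the corollary is deduced by combining Theorem \ref{thm:splittings} with Proposition \ref{prop:quotients.in.realization} (with $j=n-t$, so $n+1=(t+1)+(n-t)$) through the natural identification $\widehat{s_{J_t}(X_{n-t})}\cong X_{n-t}/S(X_{n-t})$ coming from $s_{J_t}$ being a homeomorphism onto its image. The only cosmetic difference is that you extract the equivalence $\Sigma\bigl(S^{t}(X_n)/S^{t+1}(X_n)\bigr)\simeq\bigvee_{J_t}\Sigma\bigl(\widehat{s_{J_t}(X_{n-t})}\bigr)$ by a cofiber (gluing-lemma) comparison of the restrictions in part (3), whereas the paper already has this, even unsuspended, as the homeomorphism $\gamma(n,t)$ of Lemma \ref{lem:wedges} and as the equivalence $\widehat{{H(n)|}_t}$ recorded in Lemma \ref{lem:filtration.preserving.H}.
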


The authors would like to thank Tom Baird for discussions concerning
real algebraic sets and Phil Hirschhorn for discussions concerning
simplicial spaces.

\section{Simplicial spaces}\label{sec:Simplicial spaces}

The purpose of this section is to recall standard properties of
simplicial spaces to be used in the proof of the main theorem.
Throughout this article $X_{\bullet}$ is assumed to be a simplicial
space which is proper.
Standard properties are stated in the next
lemma.
\begin{lem} \label{lem:wedges}
If $X_{\bullet}$ is a simplicial space which is 
simplicially NDR
then it satisfies the following properties.
\begin{enumerate}

\item The pair $$(s_{i_r}s_{i_{r-1}}\cdots s_{i_1}S^{t-1}(X_{n}), s_{i_r}s_{i_{r-1}}\cdots s_{i_1} S^t(X_n))$$ is an NDR pair
for all sequences $(i_r,i_{r-1}\cdots, i_1)$  and all $n$ and $t$
with $r \leq n-t.$ Thus the maps
$$s_{j_r}s_{j_{r-1}}\cdots s_{j_1} S^t(X_n) \to s_{j_r}s_{j_{r-1}}\cdots s_{j_1} S^{t-1}(X_{n})$$
are cofibrations for all sequences $(i_r,i_{r-1}\cdots, i_1)$
and all $n$ and $t$ with $r \leq n-t$ .

\item If $0 \leq r \leq n-1$, there are homeomorphisms which are
natural for morphisms of simplicial spaces

$$\gamma(n,r):\vee_{J}
\widehat{s_J(X_{n-r})} \longrightarrow S^r(X_n)/S^{r+1}(X_n)$$ where
    \begin{enumerate}
    \item $J= (j_r,j_{r-1}\cdots, j_1)$ is admissible with $|J| = r$,
    \item $\widehat{s_J(X_{n-r})} = s_J(X_{n-r})/s_JS(X_{n-r})$
    and
    \item $S^n(X_n)$ is equal to $s_0^n(X_0)$.
    \end{enumerate}
\end{enumerate}
\end{lem}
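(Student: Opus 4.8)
The plan is to handle the two assertions separately: the first is a transport-of-structure argument along the degeneracy operators, while the second combines the Eilenberg--Zilber normal form for degenerate simplices with a finite closed cover argument, and it is in the point-set topology of this second part that the real work lies.

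For part (1), the key observation is that every iterated degeneracy $s_I = s_{i_r}\cdots s_{i_1}$ is a split monomorphism (a suitable iterated face operator is a left inverse), and in the compactly generated weak Hausdorff setting used throughout, a split monomorphism is a closed embedding, being the equalizer of an idempotent with the identity. Thus $s_I$ is a homeomorphism onto a closed subspace of the relevant $X_m$. Given NDR data $(u,h)$ for the pair $(S^{t-1}(X_{n-r}),S^t(X_{n-r}))$ supplied by the simplicially NDR hypothesis, I would transport it along this homeomorphism, setting $\tilde u = u\circ s_I^{-1}$ and $\tilde h(x,\tau) = s_I\bigl(h(s_I^{-1}(x),\tau)\bigr)$ on $s_I(S^{t-1}(X_{n-r}))$; since $s_I^{-1}$ is continuous on the image, all four defining conditions of an NDR pair carry over formally. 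The cofibration statement then follows from the standard fact that an NDR pair with closed subspace has the homotopy extension property.

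For part (2), I would first pin down the underlying bijection. By the unique decomposition recalled in Section~\ref{sec:Introduction}, every $x\in X_n$ equals $s_J(y)$ for a unique admissible $J$ and a unique non-degenerate $y$, with $|J|$ an invariant of $x$; a short manipulation with the identity $s_is_j = s_{j+1}s_i$ shows that $x\in S^r(X_n)$ exactly when this invariant is at least $r$. It follows that $S^r(X_n) = \bigcup_{|J|=r} s_J(X_{n-r})$ is a union over the finitely many admissible sequences of length $r$, that distinct summands meet only inside $S^{r+1}(X_n)$, and that $s_J(X_{n-r})\cap S^{r+1}(X_n) = s_JS(X_{n-r})$. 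Hence the map $\gamma(n,r)$ whose restriction to the $J$-th wedge summand is the evident map $\widehat{s_J(X_{n-r})} = s_J(X_{n-r})/s_JS(X_{n-r})\to S^r(X_n)/S^{r+1}(X_n)$ induced by inclusion is a well-defined continuous bijection; naturality is automatic since a morphism of simplicial spaces commutes with the $s_J$ and hence with all of the quotients in sight.

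The main obstacle is promoting $\gamma(n,r)$ to a homeomorphism, and this is where the hypotheses genuinely enter. I would use that $S^r(X_n)$ is covered by the finitely many closed subspaces $s_J(X_{n-r})$ --- closed because $s_J$ is a closed embedding --- so that a subset of $S^r(X_n)$ is closed iff it meets each $s_J(X_{n-r})$ in a closed set. For a closed subset $B$ of the wedge, the intersection of its preimage in $S^r(X_n)$ with a given $s_J(X_{n-r})$ is, modulo the basepoint fibre $s_JS(X_{n-r})$, the preimage of the closed set $B\cap\widehat{s_J(X_{n-r})}$ under the quotient map $s_J(X_{n-r})\to\widehat{s_J(X_{n-r})}$; since part (1) in the case $t=1$ makes $s_JS(X_{n-r})$ closed in $s_J(X_{n-r})$, this intersection is closed. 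Hence $\gamma(n,r)$ carries closed sets to closed sets and is therefore a homeomorphism. The points requiring care are the bookkeeping of the basepoint and of points lying in several $s_J(X_{n-r})$ at once --- both controlled by the identity $s_J(X_{n-r})\cap S^{r+1}(X_n) = s_JS(X_{n-r})$ --- and checking that the finite closed cover reasoning is legitimate in the ambient compactly generated weak Hausdorff category.
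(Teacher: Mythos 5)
Your argument is correct, and its overall shape matches the paper's: part (1) is the same transport-of-structure step (the paper simply notes $s_I$ is a homeomorphism onto its image with inverse $d_{i_1}\cdots d_{i_r}$ and carries the NDR data across, exactly as you do, modulo your harmless reindexing of which $X_m$ the filtration lives in), and part (2) begins the same way, with $S^r(X_n)=\bigcup_{|J|=r}s_J(X_{n-r})$ over admissible $J$, the observation that distinct pieces intersect inside $S^{r+1}(X_n)$, and the resulting natural continuous bijection $\gamma(n,r)$. Where you genuinely diverge is the final point-set upgrade. The paper packages the inclusions into a relative homeomorphism $F(n,r):(\sqcup_J s_J(X_{n-r}),\sqcup_J s_JS(X_{n-r}))\to (S^r(X_n),S^{r+1}(X_n))$ and argues that $\gamma(n,r)$ is an \emph{open} map because $F(n,r)$ is open, ``being a local homeomorphism.'' You instead show $\gamma(n,r)$ is a \emph{closed} map: the finitely many $s_J(X_{n-r})$ are closed in $X_n$ (images of split monomorphisms in the weak Hausdorff setting), they cover $S^r(X_n)$, and the identity $s_J(X_{n-r})\cap S^{r+1}(X_n)=s_JS(X_{n-r})$ lets you check closedness of preimages summand by summand. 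Your route costs an appeal to weak Hausdorffness (to get closedness of the degeneracy images) and some basepoint bookkeeping, but it buys a more robust justification: the paper's assertion that $F(n,r)$ is a local homeomorphism is delicate at points of $S^{r+1}(X_n)$, where a summand $s_J(X_{n-r})$ need not be open in $S^r(X_n)$, whereas your finite-closed-cover argument sidesteps that issue entirely while reaching the same conclusion that $\gamma(n,r)$ is a homeomorphism, natural in $X_\bullet$.
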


\begin{proof}
The pair $(S^{t-1}(X_{n}), S^{t}(X_{n}))$ is an NDR pair for all
$n$ and $t$ with $ t \leq n $ by hypothesis. Let $I=
(i_r,i_{r-1}\cdots, i_1)$. Since the map $s_I =
s_{i_r}s_{i_{r-1}}\cdots, s_{i_1}$ is a homeomorphism onto its
image with one choice of inverse $d_{i_1} \cdots
d_{i_{r-1}}d_{i_r}$, the pair
$$(s_{i_r}s_{i_{r-1}}\cdots s_{i_1}S^{t-1}(X_{n}),
s_{i_r}s_{i_{r-1}}\cdots s_{i_1} S^t(X_n))$$ 
is an NDR pair for all
sequences.
The first part of Lemma \ref{lem:wedges} follows.

To prove the second part of Lemma \ref{lem:wedges}, observe that
$S^r(X_n) = \cup_I s_{i_r}s_{i_{r-1}}\cdots s_{i_1}(X_{n-r})$ for
$I= (i_r,i_{r-1}\cdots, i_1)$ admissible with $|I|= r$. Now 
consider $J = (j_r,j_{r-1}\cdots, j_1)$ admissible for $I \neq J$.
Since $I \neq J$, let $t$ denote the largest integer for which
the entries $i_t$ and $j_t$ are not equal; without loss of
generality we can assume that $i_t > j_t$.
Applying $d_{i_t} d_{i_{t+1}} \cdots d_{i_{r-1}}d_{i_r}$ and using
the simplicial identities gives that
$s_I(X_{n-r}) \cap s_J(X_{n-r}) \subset S^{r+1}(X_n)$.

Now if $J = (j_r,j_{r-1}\cdots, j_1)$ is admissible with $|J| =
r$, then the inclusions give rise to a relative homeomorphism
$$F(n,r):(\sqcup_I s_J(X_{n-r}), \sqcup_J s_J(S(X_{n-r})))
\to (S^r(X_n), S^{r+1}(X_n))$$
which thus induces a
natural map
$$\gamma(n,r):\vee_{J}
\widehat{s_{J}(X_{n-r})} \to S^r(X_n)/S^{r+1}(X_n)$$ 
that is a
continuous bijection.
Hence, it suffices to check that
the map $\gamma(n,r)$ is open.
Note that there is a commutative diagram
\[
\begin{CD}
\sqcup_J s_J(X_{n-r})  @>{F(n,r)}>> S^r(X_n) \\
  @V{\pi_1}VV            @VV{\pi_2}V \\
\vee_{J}\widehat{s_{J}(X_{n-r})} @>{\gamma(n,r)}>> 
S^r(X_n)/S^{r+1}(X_n).
\end{CD}
\]
where $\pi_1$ and $\pi_2$ are the natural projection maps.
$F(n,r)$ is an open map as it is a local homeomorphism,
and so it follows that
the induced map $\gamma(n,r)$ is
also open.
The second part of Lemma
\ref{lem:wedges} follows.
\end{proof}

\section{The Proof of Theorem \ref{thm:splittings}}\label{sec:Proof of Theorem splitting.in.section.simplicial.spaces}

Theorem \ref{thm:splittings} gives two
different decompositions:
\begin{enumerate}
  \item One decomposition arises from the equivalence $$\Theta(n): \Sigma(X_n) \longrightarrow  \bigvee_{0 \leq r \leq n}
\Sigma(S^r(X_{n})/S^{r+1}(X_{n})).$$

  \item The second decomposition arises by using the equivalences 
$$\widehat {{H(n)|}_t}: \Sigma(S^t(X_n)/S^{t+1}(X_n)) \longrightarrow
\Sigma(\bigvee_{J_t}\widehat{s_{J_t}(X_{n-t})})$$
induced by the ${H(n)|}_t$.

\end{enumerate} One direct proof arises from giving both splittings at once as given below.
This proof is the precise setting of the analogue of classical
James-Hopf invariant maps and how they fit into a simplicial setting
as well as a splitting of simplicial spaces.

The details of proof come from a construction of the maps ${\widehat {{H(n)|}_t}}$
together with some tedious verifications using simplicial
identities. The main work requires definitions of the analogue of
James-Hopf invariants.

Let $$D(n,r) = \vee_{|J| = r} \widehat{s_J(X_{n-r})} $$ where
\begin{enumerate}
\item $J = (j_r,j_{r-1}\cdots, j_1)$ is admissible,
\item $\widehat{s_{J}(X_{n-r})} = s_{J}(X_{n-r})/s_{J}S(X_{n-r})$ and
\item $S^n(X_n)$ is equal to $s_0^n(X_0)$.
\end{enumerate}

The method of proof is to exhibit a map
$$H(n): \Sigma(X_n) \to \bigvee_{0 \leq r \leq n} \Sigma(D(n,r))$$
as described next with the following properties.

\begin{lem}\label{lem:filtration.preserving.H}
Assume that the simplicial space $X_{\bullet}$ is 
simplicially
NDR. Then
there is a map
$$H(n): \Sigma(X_n) \to \bigvee_{0 \leq r \leq n} \Sigma(D(n,r))$$
with the following properties.

\begin{enumerate}

\item The map $H(n)$ restricts to a map
$${H(n)|}_t: \Sigma(S^t(X_n)) \to \bigvee_{t \leq r \leq n} \Sigma(D(n,r)).$$

\item There is  a morphisms of cofibrations
\[
\begin{CD}
\Sigma(S^{t+1}(X_n))  @>{{H(n)|}_{t+1}}>> \bigvee_{t+1 \leq r \leq n} \Sigma(D(n,r)) \\
  @V{i(n,t+1)}VV            @VV{\bar i(n,t+1)}V \\
\Sigma(S^t(X_n))  @>{{H(n)|}_t}>> \bigvee_{t \leq r \leq n} \Sigma(D(n,r)) \\
  @V{q(t)}VV            @VV{\bar q(t)}V \\
  \Sigma(S^t(X_n)/S^{t+1}(X_n))  @>{\widehat {{H(n)|}_t}}>> \Sigma(D(n,t))
\end{CD}
\]

\noindent where $i(n,t+1)$, $\bar i(n,t+1)$, $q(t)$ and $\bar q(t)$
are the natural inclusions and projections.

\item The map $\widehat {{H(n)|}_t}:\Sigma(S^t(X_n)/S^{t+1}(X_n)) \to
\Sigma(D(n,t))$ is induced by $H(n)$.

\item The map $\widehat {{H(n)|}_t}$ is a homotopy equivalence, and

\item the map ${H(n)|}_{t+1}:\Sigma(S^{t+1}(X_n)) \to \bigvee_{t+1 \leq r \leq n}\Sigma(D(n,r))$
is a homotopy equivalence by downward induction on $t$ starting with
$t=n$ which is the equivalence $\widehat {{H(n)|}_n}:\Sigma(S^n(X_n)
= s_0^n(X_0) \to \Sigma(D(n,n)).$
\end{enumerate}
\end{lem}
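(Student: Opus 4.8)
The plan is to construct the map $H(n)$ as a simplicial analogue of the classical James--Hopf invariant, built out of the combinatorics of degeneracy operators. The starting point is the observation (Lemma \ref{lem:wedges}(2)) that for each $r$ there is a natural homeomorphism $\gamma(n,r)\colon D(n,r)\to S^r(X_n)/S^{r+1}(X_n)$, so a map $\Sigma(X_n)\to\bigvee_r\Sigma(D(n,r))$ is the same datum as a stable splitting of the decreasing filtration $\{S^r(X_n)\}$. To produce such a splitting I would first handle the top of the filtration: since $S^{n+1}(X_n)$ is empty, the bottom nonempty stage is $S^n(X_n)=s_0^n(X_0)=D(n,n)$, and ${H(n)|}_n$ is just the identity. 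This anchors the downward induction in part (5).

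The inductive step is the heart of the matter. Assuming ${H(n)|}_{t+1}$ is a homotopy equivalence, I would build ${H(n)|}_t\colon\Sigma(S^t(X_n))\to\bigvee_{t\le r\le n}\Sigma(D(n,r))$ so that the two-square diagram in part (2) commutes. The left column $\Sigma(S^{t+1}(X_n))\to\Sigma(S^t(X_n))\to\Sigma(S^t(X_n)/S^{t+1}(X_n))$ is a cofibration sequence of suspensions — by Lemma \ref{lem:wedges}(1) the inclusion $S^{t+1}(X_n)\hookrightarrow S^t(X_n)$ is a cofibration, and after suspending once such a cofibration sequence splits once we exhibit a retraction. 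So the real content is to define the bottom map $\widehat{{H(n)|}_t}\colon\Sigma(S^t(X_n)/S^{t+1}(X_n))\to\Sigma(D(n,t))$ and show it is an equivalence; here one simply takes $\Sigma$ of the inverse homeomorphism $\gamma(n,t)^{-1}$, which is automatically an equivalence, establishing part (4). The map $\widehat{{H(n)|}_t}$ must be realized as \emph{induced} by an actual map ${H(n)|}_t$ on $\Sigma(S^t(X_n))$ rather than merely on the quotient — this is where the James--Hopf construction enters: one uses the explicit description of $S^t(X_n)$ as a union $\bigcup_J s_J(X_{n-t})$ indexed by admissible $J$, together with the reduced diagonal/pinch maps that collapse the ``singular'' parts $s_J S(X_{n-t})$, to write down a projection $\Sigma(S^t(X_n))\to\Sigma(D(n,t))$ compatible with the quotient. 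Combining this projection with $\Sigma(i(n,t+1))\circ{H(n)|}_{t+1}$ (extended appropriately) gives ${H(n)|}_t$ landing in the full wedge $\bigvee_{t\le r\le n}\Sigma(D(n,r))$, and the commutativity of the lower square is forced by construction while the upper square follows from the inductive definition. Then part (5) is immediate: $\Sigma(i(n,t+1))$ is a cofibration with cofiber $\Sigma(S^t(X_n)/S^{t+1}(X_n))$, the right column $\bar i(n,t+1)$ is a split inclusion of wedge summands with cofiber $\Sigma(D(n,t))$, so by the five-lemma for the long exact sequences in homotopy (or directly by comparing cofiber sequences) ${H(n)|}_t$ is an equivalence given that ${H(n)|}_{t+1}$ and $\widehat{{H(n)|}_t}$ are. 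Finally ${H(n)|}_0=H(n)$ itself, proving the unrestricted statement.

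The main obstacle, and where the ``tedious verifications using simplicial identities'' live, is producing the projection $\Sigma(S^t(X_n))\to\Sigma(D(n,t))$ that genuinely refines $\gamma(n,t)^{-1}$ and is natural for morphisms of simplicial spaces. The subtlety is that $S^t(X_n)$ is a union, not a disjoint union, of the images $s_J(X_{n-t})$; these overlap precisely inside $S^{t+1}(X_n)$ (this is the computation in the proof of Lemma \ref{lem:wedges} showing $s_I(X_{n-t})\cap s_J(X_{n-t})\subset S^{t+1}(X_n)$ for $I\neq J$). To split off each wedge summand $\widehat{s_J(X_{n-t})}$ one needs, for each admissible $J$ with $|J|=t$, a retraction-like map $S^t(X_n)\to\widehat{s_J(X_{n-t})}$ that is the canonical quotient on $s_J(X_{n-t})$ and collapses everything not in the interior of that cell; concretely this is the composite of an appropriate iterated face operator $d_{j_1}\cdots d_{j_t}$ (a one-sided inverse to $s_J$) with the quotient by $S(X_{n-t})$, and one must check via the simplicial identities that applying $d_{j_1}\cdots d_{j_t}$ carries $s_I(X_{n-t})$ into $S(X_{n-t})$ for every admissible $I\neq J$ — i.e. that the only cell surviving is the $J$-cell. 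Assembling these over all $J$ gives $S^t(X_n)\to\bigvee_J\widehat{s_J(X_{n-t})}=D(n,t)$, but it is only well defined \emph{after one suspension} because the individual retractions disagree on the overlaps; the suspension coordinate is exactly what lets the James--Hopf-type sum of these partial collapses be coherent, just as in the classical splitting of $\Sigma\Omega\Sigma Y$ and in the moment-angle complex argument of \cite{bahri.bendersky.cohen.gitler}. Once this projection is in hand, naturality is formal since every ingredient — the $s_J$, the $d_{j_i}$, the quotients by $S(X_{n-t})$ — is natural in $X_\bullet$, and the remaining bookkeeping is the routine inductive comparison of cofiber sequences described above.
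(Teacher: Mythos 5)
Your overall strategy---collapse maps built from iterated face operators that are one-sided inverses to the $s_J$, summed using the suspension coordinate, followed by a downward induction comparing cofibration sequences---is essentially the paper's. But the verification you hinge the construction on is false as stated: you claim that for admissible $I\neq J$ with $|I|=|J|=t$ the operator $d_{j_1}\cdots d_{j_t}=d_{\chi(J)}$ carries $s_I(X_{n-t})$ into $S(X_{n-t})$, so that ``the only cell surviving is the $J$-cell.'' Already for $t=1$, $J=(1)$, $I=(0)$ the simplicial identities give $d_1s_0=\mathrm{id}$, so the composite of $d_{\chi(J)}$ with the collapse onto $\widehat{s_1(X_{n-1})}$ sends $s_0(x)$ to the class of $s_1(x)$, which is not the basepoint whenever $x$ is nondegenerate. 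Degeneracy holds only on one side of the lexicographic order: this is exactly the content of Lemma \ref{lem:upper.triangular.d.I.s.J} ($d_{\chi(I)}s_J$ is degenerate when $I<J$, with no claim when $I>J$), and the total ordering of Definition \ref{defin:order.admissible.sequences}, which plays no role in your argument, is what the actual proof runs on.

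Because of this, $\widehat{H(n)|_t}$ is not $\Sigma(\gamma(n,t)^{-1})$ as you assert: the induced map on $\Sigma(S^t(X_n)/S^{t+1}(X_n))\simeq\bigvee_J\Sigma\widehat{s_J(X_{n-t})}$ is only triangular, with the canonical quotient maps on the diagonal and possibly nontrivial components from the $I$-summand to the $J$-summand when $I<J$. So part (4) is not ``automatic''; one needs the additional (formal but necessary) argument that a triangular self-map of a finite wedge of suspensions with equivalences on the diagonal is a homotopy equivalence, using the co-H structure and induction on the number of summands. A second, smaller gap: ``combining this projection with $\Sigma(i(n,t+1))\circ H(n)|_{t+1}$ (extended appropriately)'' conceals a real extension problem, since $H(n)|_{t+1}$ is only defined on the subspace $\Sigma(S^{t+1}(X_n))$ and there is no natural extension in general. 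The paper avoids this entirely by defining every component $\lambda(n,J)$ on all of $\Sigma(X_n)$ at once via $\D(r)=d_{\chi(I_1)}\times\cdots\times d_{\chi(I_{\alpha(n,r)})}$, and then Lemma \ref{lem:Filtrations.and.D} (that $\D(r)$ carries $S^{r+1}(X_n)$ into $(S(X_{n-r}))^{\binom{n+1}{r}}$) is what makes $H(n)$ filtration-compatible with no extension step. Finally, a minor inaccuracy: each individual collapse $\bar\sigma_J\circ d_{\chi(J)}$ is well defined before suspending; the suspension is needed only to add the maps into the wedge, not for well-definedness on overlaps.
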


Notice that Theorem \ref{thm:splittings}
is an immediate consequence of Lemma
\ref{lem:filtration.preserving.H}. 
The key step is to define the map $H(n)$; the verification of
its properties will be left to the reader.

Consider admissible
sequences $I= (i_r,i_{r-1}\cdots, i_1)$ with $ n \geq i_r > i_{r-1}>
\cdots> i_1 \geq 0$. Thus $s_I(X_{n-r})$ is a subspace of $X_n$.
Define $\chi(I)= (i_1,i_2, \cdots i_{r-1}, i_r)$. Thus
$$d_{\chi(I)}= d_{i_1}d_{i_2} \cdots d_{i_{r-1}}d_{i_r} 
~~\rm{and}~~
d_{\chi(I)}\circ s_I(x) = x.$$
The natural lexicographical total ordering on such admissible
sequences is obtained next from a partial ordering on all such
sequences (not necessarily admissible).
\begin{defin}\label{defin:order.admissible.sequences}
If $I= (i_r,i_{r-1}\cdots, i_1)$ and $J = (j_r,j_{r-1}\cdots, j_1)$
are sequences with $|I| = |J| =  r$ and $I \neq J$, define
$I < J$ provided there exists a $p \leq r$ such that
$i_k = j_k$ if $p < k \leq r$ with
$i_p < j_p$.
\end{defin}

Since an admissible sequence $I= (i_r,i_{r-1}\cdots, i_1)$ satisfies
$n \geq i_r > i_{r-1}> \cdots> i_1 \geq 0$, there are exactly
${\binom {n+1} r}$ choices of admissible sequences with $|I| = r$.
Furthermore, the partial ordering in Definition
\ref{defin:order.admissible.sequences} restricts to a total ordering
on admissible sequences which satisfy $|I| = r$. The next lemma is a
direct verification with details omitted.

\begin{lem} \label{lem:upper.triangular.d.I.s.J}
Assume that $X_{\bullet}$ is a simplicial space, both $I$ and $J$ are
admissible with $|I| = |J| = r$ and that $x \in X_{n-r}$. Then

\[
d_{\chi(I)}s_J(x) =
\begin{cases}
x & \text{if $I = J$ and}\\
s_{m}(y) & \text{for some $s_m$ and some $y$ if $I < J $.}
\end{cases}
\]
\end{lem}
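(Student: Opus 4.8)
The plan is to compute $d_{\chi(I)}s_J(x)$ directly from the simplicial identities, treating separately the cases $I=J$ and $I<J$. The case $I=J$ is just the identity $d_{\chi(I)}\circ s_I=\mathrm{id}$ already recorded above: reading the composite as $d_{i_1}(d_{i_2}(\cdots d_{i_r}(s_{i_r}(\cdots s_{i_1}(x)))))$, the innermost pair $d_{i_r}s_{i_r}=\mathrm{id}$ cancels, and one inducts on $r$. So all the real work is in the case $I<J$.

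Assume $I<J$ and let $p\le r$ be the index supplied by Definition \ref{defin:order.admissible.sequences}, so that $i_k=j_k$ for $p<k\le r$ and $i_p<j_p$. First I would strip off the common top part: since $d_{\chi(I)}=d_{i_1}\cdots d_{i_p}\,d_{i_{p+1}}\cdots d_{i_r}$ and $s_J=s_{i_r}\cdots s_{i_{p+1}}\,s_{j_p}\cdots s_{j_1}$, the middle block $d_{i_{p+1}}\cdots d_{i_r}\,s_{i_r}\cdots s_{i_{p+1}}$ collapses to the identity by exactly the same repeated cancellation $d_{i_k}s_{i_k}=\mathrm{id}$ used in the case $I=J$. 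This reduces the claim to showing that $d_{i_1}\cdots d_{i_p}\,s_{j_p}s_{j_{p-1}}\cdots s_{j_1}(x)$ is a degenerate element.

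Next I would push a degeneracy all the way to the outside. Since $i_p<j_p$, the identity $d_is_j=s_{j-1}d_i$ (valid for $i<j$) gives $d_{i_p}s_{j_p}=s_{j_p-1}d_{i_p}$; then, one step at a time, I would commute the newly created degeneracy leftward past $d_{i_{p-1}},d_{i_{p-2}},\dots,d_{i_1}$, each commutation again using $d_is_j=s_{j-1}d_i$ and lowering the degeneracy index by one. The outcome is $d_{\chi(I)}s_J(x)=s_{j_p-p}\bigl(d_{i_1}\cdots d_{i_p}\,s_{j_{p-1}}\cdots s_{j_1}(x)\bigr)$, which is of the asserted form $s_m(y)$ with $m=j_p-p$.

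The only point beyond mechanical bookkeeping is to check that every commutation $d_{i_k}s_\ell=s_{\ell-1}d_{i_k}$ is legitimate (i.e.\ $i_k$ lies strictly below the current degeneracy index) and that $s_{j_p-p}$ is an honest degeneracy, i.e.\ $j_p-p\ge 0$. Both follow from admissibility of $I$ together with $i_p<j_p$: strict monotonicity of $I$ gives $i_k\le i_p-(p-k)\le j_p-1-(p-k)$, which is precisely the inequality $i_k<j_p-p+k$ needed at the $k$th commutation step, and the case $k=1$ together with $i_1\ge 0$ yields $j_p\ge p$. I would present this as a single "the required inequalities hold because \dots" remark rather than isolating it. Keeping the degeneracy indices aligned through this chain of commutations is the only mildly delicate point; there is no conceptual obstacle, which is why the statement is legitimately a direct verification.
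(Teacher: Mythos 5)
Your proof is correct, and it supplies precisely the "direct verification with details omitted" that the paper invokes: cancel the common block via $d_{i_k}s_{i_k}=\mathrm{id}$, then push a degeneracy outward using $d_is_j=s_{j-1}d_i$ for $i<j$, with the admissibility of $I$ and $i_p<j_p$ guaranteeing each commutation and $j_p-p\ge 0$. Nothing is missing.
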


\begin{defin}\label{defin:product.of.faces.map}
Restrict to admissible sequences $I_s$ with $|I_s| = r$. Define
$$\D(r): X_n \to (X_{n-r})^{\binom {n+1} r}$$ by
$$\D(r) = d_{\chi(I_1)} \times d_{\chi(I_2)} \times \cdots \times d_{\chi(I_{\alpha(n,r)})}$$
where $I_s < I_{s+1} $ for all $1 \leq s$ and $\alpha(n,r)= {\binom
{n+1} r}.$

If $r=0$, then $$\D(0): X_n \to X_{n}$$ is the identity map by
convention.

\end{defin}

The next lemma follows at once from the definitions.
\begin{lem} \label{lem:Filtrations.and.D}
Assume that $X_{\bullet}$ is a simplicial space. Then the map
$$\D(r): X_n \to (X_{n-r})^{\binom {n+1} r}$$ restricts to a map
$$\D(r): S^{r+1}(X_n) \to (S(X_{n-r}))^{\binom {n+1} r}$$
for all $0 \leq r \leq n$. Thus there is a commutative diagram
\[
\begin{CD}
S^{r+1}(X_{n})  @>{ \D(r)}>> (S(X_{n-r}))^{\binom {n+1} r} \\
  @V{i(n,r+1)}VV            @VV{i(n-r,1)^{\binom {n+1} r}}V \\
S^r(X_{n})  @>{\D(r)}>> (X_{n-r})^{\binom {n+1} r} \\
  @V{i(n,r)}VV            @VV{1}V \\
X_n  @>{\D(r)}>> (X_{n-r})^{\binom {n+1} r}.
\end{CD}
\]

\end{lem}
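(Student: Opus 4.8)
The plan is to verify the asserted restriction directly from Definition \ref{defin:product.of.faces.map} using Lemma \ref{lem:upper.triangular.d.I.s.J}. The map $\D(r)$ is the product $d_{\chi(I_1)} \times \cdots \times d_{\chi(I_{\alpha(n,r)})}$ over all admissible $I_s$ with $|I_s| = r$, so the claim $\D(r)\bigl(S^{r+1}(X_n)\bigr) \subset (S(X_{n-r}))^{\binom{n+1}{r}}$ amounts to showing that each individual coordinate $d_{\chi(I)}$ carries $S^{r+1}(X_n)$ into $S(X_{n-r})$. First I would take an arbitrary $x \in S^{r+1}(X_n)$; by Definition \ref{defin:good.simplicial.spaces} we may write $x = s_K(z)$ for some admissible $K$ with $|K| = r+1$ and some $z \in X_{n-r-1}$.

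Next I would analyze $d_{\chi(I)}s_K(z)$ for a fixed admissible $I$ with $|I| = r$. Writing $d_{\chi(I)} = d_{i_1}d_{i_2}\cdots d_{i_r}$ and $s_K = s_{k_{r+1}}s_{k_r}\cdots s_{k_1}$, one repeatedly applies the simplicial identities $d_i s_j = s_j d_{i-1}$ ($i>j+1$), $d_i s_j = s_{j-1} d_i$ ($i<j$), and $d_j s_j = d_{j+1}s_j = \mathrm{id}$. Since $|K| = r+1 > r = |\chi(I)|$, after cancelling at most $r$ of the degeneracies in $s_K$ against the $r$ faces in $d_{\chi(I)}$, at least one degeneracy operator survives (possibly with shifted index), so $d_{\chi(I)}s_K(z) = s_m(w)$ for some $m$ and some $w \in X_{n-r-1}$; hence it lies in $S(X_{n-r})$. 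This is essentially the same bookkeeping that underlies Lemma \ref{lem:upper.triangular.d.I.s.J}, applied here with the source element already degenerate of depth $r+1$ rather than $r$. Once each coordinate lands in $S(X_{n-r})$, the restricted map $\D(r)\colon S^{r+1}(X_n) \to (S(X_{n-r}))^{\binom{n+1}{r}}$ is defined, and its compatibility with the inclusions $i(n,r+1)$ and $i(n-r,1)^{\binom{n+1}{r}}$ is immediate since it is simply a corestriction of the same underlying map; similarly the lower square commutes trivially because the bottom $\D(r)$ agrees with the middle one composed with $i(n,r)$. This gives the stated commutative diagram.

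The main obstacle is purely notational: one must be careful that the index shifts in the simplicial identities do not accidentally destroy the surviving degeneracy (they only shift its subscript, never remove it), and that the argument is uniform over all $\binom{n+1}{r}$ admissible sequences $I$. Since the excerpt explicitly flags this verification as following "at once from the definitions," I would present the cancellation count for a single coordinate and remark that the remaining coordinates are handled identically, leaving the detailed index-chasing to the reader.
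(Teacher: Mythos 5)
Your proposal is correct and matches the paper's intent: the paper offers no written argument here, stating only that the lemma ``follows at once from the definitions,'' and your coordinatewise counting argument (each face map in $d_{\chi(I)}$ can cancel at most one of the $r+1$ degeneracies, so at least one degeneracy survives and the image lies in $S(X_{n-r})$, with the diagram commuting because the maps are restrictions/corestrictions) is exactly the routine verification being left to the reader.
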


The definition of the map $H(n)$ is given next. Recall the maps
$$\D(r): X_n \to (X_{n-r})^{\binom {n+1} r}$$ given by $$\D(r) =
d_{\chi(I_1)} \times d_{\chi(I_2)} \times \cdots \times
d_{\chi(I_{\alpha(n,r)})} $$ where $I_s < I_{s+1}$ for all $s \geq
1$. The coordinates in $(X_{n-r})^{\binom {n+1} r}$ are indexed by
$\chi(I)$ for $I$ admissible with $|I|=r$. Let $$P_{\chi(I)}:
(X_{n-r})^{\binom {n+1} r} \to X_{n-r}$$ denote the projection map
to the $\chi(I)$-th coordinate.

Recall that $$D(n,r)=\vee_{|J|=r}\widehat{s_{J}(X_{n-r})}$$
where
\begin{enumerate}
\item $J = (j_r,j_{r-1}\cdots, j_1)$ is admissible,
\item $\widehat{s_{J}(X_{n-r})} = s_{J}(X_{n-r})/s_JS(X_{n-r})$ and
\item $S^n(X_n)$ is equal to $s_0^n(X_0)$.
\end{enumerate}

\begin{defin}\label{defin:H(n)}
\begin{enumerate}
  \item Let $$\nu(n,J): \Sigma((X_{n-r})^{\binom {n+1} r}) \longrightarrow
\widehat{s_{J}(X_{n-r})}$$ denote the composite
$\nu(n,J) = \bar \sigma_{J} \circ P_{\chi(J)}$.
  \item Let $$\lambda(n,J): \Sigma(X_n)\longrightarrow
\widehat{s_{J}(X_{n-r})}$$ denote the composite
$\lambda(n,J) = \nu(n,J) \circ \Sigma(\D(r))$.
  \item Let $$ \Phi(n,r):\Sigma(X_n) \longrightarrow \bigvee_{|J|=r}
\widehat{s_{J}(X_{n-r})}$$ denote the sum
$$ \Phi(n,r)= \sum_{|J|= r}\lambda(n,J)$$ where the index
is over all admissible sequence $J$ with a fixed order of
summation.
\item
Define $$H(n): \Sigma(X_n) \to \bigvee_{0 \leq r \leq n}
\Sigma(D(n,r))$$ as the sum $$H(n)= \sum_{0 \leq r \leq n}
\Phi(n,r)$$ with a fixed order of summation.
\end{enumerate}
\end{defin}

Therefore this defines the desired map
$$H(n): \Sigma(X_n) \to \bigvee_{0 \leq r \leq n}
\Sigma(D(n,r))$$ which when restricted to $S^t(X_{n})$ 
makes the following diagram commute (up to homotopy)

\[
\begin{CD}
\Sigma(S^{t}(X_n))  @>{{H(n)|}_{t}}>> \bigvee_{t\leq r \leq n} \Sigma(D(n,r)) \\
  @V{i(n,t)}VV            @VV{\bar i(n,t)}V \\
\Sigma(X_n)  @>{H(n)}>> \bigvee_{0 \leq r \leq n} \Sigma(D(n,r)).
\end{CD}
\]

The desired properties of $H(n)$ can be readily verified and are left
to the reader.

\section{Simplicial sets and real algebraic sets}\label{sec:Simplicial sets and real algebraic sets}

The purpose of this section is to (i) recall standard properties of
simplicial complexes as well as (ii) the way in which Theorem
\ref{thm:splittings} can be applied to real algebraic varieties.

Let $K$ denote an abstract simplicial complex on $m$ vertices
labeled by the set $$[m]=\{1,2,\ldots, m\}.$$ Thus a simplex
$\sigma$ of $K$ is given by an ordered sequence $$\sigma
=(i_1,\cdots, i_k)$$ with $1 \leq i_1 <\cdots < i_k \leq m$ such
that if $\tau \subset \sigma$, then $\tau$ is a simplex of $K$.
Recall the simplicial set $\Delta(K)$ obtained from an abstract
simplicial complex as defined in \cite{bousfield.kan} page 234.

\begin{defin}\label{defin:moment.angle.complex}
The simplicial set $\Delta(K)$ is defined as follows.
\begin{itemize}
\item $\Delta (K)$ 
has $n$-simplices given by the $(n+1)$-tuples of
vertices $(v_0,v_1, \cdots, v_n)$ for which $v_0
\leq v_1 \leq  \cdots \leq  v_n$ 
\item 
the face and degeneracy operators are given by
        $$d_i(v_0,v_1, \cdots, v_n) = (v_0,\cdots v_{i-1}, v_{i+1},\cdots,
        v_n)$$
        and
        $$s_i(v_0,v_1, \cdots, v_n) = (v_0,\cdots v_{i}, v_{i},\cdots,
        v_n).$$
        \end{itemize}
\end{defin}

As pointed out in \cite{bousfield.kan}, the following result follows 
from the two paragraphs before Theorem 15 on page 111 in \cite{spanier}.

\begin{thm}\label{thm:homeomorphic.realization.of.order.complex}
The geometric realization  $|\Delta(K)|$ is homeomorphic to $|K|$.
\end{thm}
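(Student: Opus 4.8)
The plan is to prove Theorem \ref{thm:homeomorphic.realization.of.order.complex} by exhibiting a natural homeomorphism $|\Delta(K)| \to |K|$ induced from a simplicial-to-topological map, relying on the ordered-simplicial-complex structure of $K$. First I would fix the linear order $1 < 2 < \cdots < m$ on the vertex set $[m]$, which makes every simplex of $K$ an ordered simplex; this is exactly the hypothesis needed so that the non-degenerate $n$-simplices of $\Delta(K)$ --- the strictly increasing tuples $(v_0 < v_1 < \cdots < v_n)$ with $\{v_0,\dots,v_n\}$ a simplex of $K$ --- correspond bijectively to the $n$-simplices of $K$. The map on geometric realizations is then defined on each closed simplex: a point of $|\Delta(K)|$ lies in the image of $\Delta^n \times \{\sigma\}$ for a non-degenerate $n$-simplex $\sigma = (v_0,\dots,v_n)$, and I send the barycentric coordinates $(t_0,\dots,t_n)$ on $\Delta^n$ to the point $\sum_i t_i v_i$ in the affine simplex of $|K|$ spanned by $v_0,\dots,v_n$.

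The key steps, in order, are: (1) Check this assignment is well defined on $|\Delta(K)|$, i.e. that it is compatible with the face and degeneracy identifications. For the face maps $d_i$ this is the standard matching of the $i$-th face inclusion of $\Delta^n$ with the inclusion of the subsimplex obtained by deleting $v_i$; for the degeneracies $s_i$, which repeat $v_i$, the point $\sum t_j v_j$ simply collapses (the repeated vertex contributes $t_i + t_{i+1}$ to the single vertex $v_i$), so the map factors through the quotient by degeneracies --- this is where the explicit formulas for $d_i$ and $s_i$ in Definition \ref{defin:moment.angle.complex} are used. (2) Observe the induced map $\Psi: |\Delta(K)| \to |K|$ is continuous, since it is continuous on each cell $\Delta^n \times \{\sigma\}$ and $|\Delta(K)|$ carries the quotient (colimit) topology; with May's compactly generated weak Hausdorff conventions this is automatic. (3) Show $\Psi$ is a bijection: surjectivity is clear because every point of $|K|$ lies in the interior of a unique simplex and is a convex combination with positive coefficients of its (ordered) vertices, which lifts to the interior of the corresponding non-degenerate simplex of $\Delta(K)$; injectivity follows because that lift is forced --- the combinatorics of $\Delta(K)$ restricted to non-degenerate simplices is exactly the combinatorics of $K$. (4) Conclude $\Psi$ is a homeomorphism: both $|\Delta(K)|$ and $|K|$ are built as colimits over the same finite poset of simplices of $K$ with the same attaching data, so $\Psi$ is a continuous bijection between compact Hausdorff spaces (for $K$ finite; for general $K$ one checks it is a bijective closed map by restricting to finite subcomplexes), hence a homeomorphism. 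Naturality in $K$, for simplicial maps that preserve the vertex order, is then immediate from the formula.

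The main obstacle I anticipate is the bookkeeping in step (1): verifying that the degeneracy maps $s_i$ of $\Delta(K)$ are correctly ``absorbed'' by the geometric realization of $K$, which has no degeneracies. Concretely one must check that two points of $\bigsqcup_n \Delta^n \times (\Delta(K))_n$ that are identified in $|\Delta(K)|$ via a zig-zag of face and degeneracy relations have the same image $\sum t_i v_i$ in $|K|$; the cleanest route is to use the standard fact (as in \cite{spanier} or \cite{bousfield.kan}) that $|\Delta(K)|$ is canonically the coequalizer identifying each simplex with the image of its unique non-degenerate ``core'' under iterated degeneracies, reducing the whole check to the non-degenerate simplices, where the correspondence with $K$ is tautological. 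A secondary, purely point-set, subtlety is ensuring that for infinite $K$ the colimit topologies on both sides agree under $\Psi$; this is handled by noting $\Psi$ is a bijection restricting to a homeomorphism on each finite subcomplex and both realizations are colimits of their finite subcomplexes, so $\Psi$ is a homeomorphism. Since this theorem is quoted from \cite{spanier} and \cite{bousfield.kan}, I would in practice just cite those and sketch only the identification of non-degenerate simplices of $\Delta(K)$ with simplices of $K$, which is the conceptual content.
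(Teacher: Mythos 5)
Your argument is correct, and it is in fact more than the paper provides: the paper offers no proof at all, simply citing the two paragraphs before Theorem 15 on page 111 of \cite{spanier} (via \cite{bousfield.kan}). What you have written out is precisely the standard argument that underlies that citation --- fixing the linear order on $[m]$, identifying the non-degenerate $n$-simplices of $\Delta(K)$ (strictly increasing tuples spanning a simplex of $K$) with the $n$-simplices of $K$, sending barycentric coordinates $(t_0,\dots,t_n)$ to $\sum_i t_i v_i$, checking compatibility with $d_i$ and with $s_i$ (where the repeated vertex absorbs $t_i+t_{i+1}$), and using the unique non-degenerate representative of each point of a simplicial set's realization to get bijectivity. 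Two small remarks: the paper's Definition \ref{defin:moment.angle.complex} tacitly requires that the underlying set $\{v_0,\dots,v_n\}$ be a simplex of $K$, which your identification of non-degenerate simplices correctly assumes; and since $K$ has vertex set $[m]$ it is automatically a finite complex, so the compact-Hausdorff continuous-bijection argument already suffices and your caveat about infinite complexes and colimits over finite subcomplexes, while fine, is not needed here. Your closing option --- cite \cite{spanier} and only sketch the identification of non-degenerate simplices --- is exactly what the paper does; your fuller version is self-contained and equally valid.
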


The benefits of this construction are expressed in the following
result, which is left to the reader for verification.

\begin{prop}\label{prop:order.complex}
The simplicial space $\Delta(K)$ is proper and simplicially NDR.
Thus Theorem
\ref{thm:splittings} applies to $\Delta(K)$.
\end{prop}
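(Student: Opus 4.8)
The plan is to verify the two claimed properties of the simplicial set $\Delta(K)$ directly from the explicit description of its face and degeneracy operators in Definition \ref{defin:moment.angle.complex}. The key observation is that $\Delta(K)$ is a simplicial set — that is, each $\Delta(K)_n$ is a discrete space — so all the point-set topological conditions (strong NDR pairs, NDR pairs) become trivial once one knows that the relevant inclusions of sub-simplicial-sets are inclusions of discrete spaces, which are automatically closed cofibrations. Thus the real content is purely combinatorial: one must identify the degeneracy filtration $S^t(\Delta(K)_n)$ explicitly and check it is a filtration by subsets, which it visibly is.

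First I would spell out that an $n$-simplex $(v_0 \le v_1 \le \cdots \le v_n)$ is degenerate precisely when some consecutive pair is equal, $v_{i}=v_{i+1}$, and that $S(\Delta(K)_n) = \bigcup_i s_i(\Delta(K)_{n-1})$ consists of exactly these tuples; more generally $S^t(\Delta(K)_n)$ consists of the tuples with at least $t$ coincidences among consecutive entries, equivalently the image of the non-degenerate part of $\Delta(K)_{n-t}$ under all length-$t$ admissible degeneracy words. Each $S^t(\Delta(K)_n) \subseteq S^{t-1}(\Delta(K)_n)$ is simply an inclusion of finite discrete sets.

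Next I would invoke the standard fact that for a discrete space $Z$ and a subset $A \subseteq Z$, the pair $(Z,A)$ is a strong NDR pair: take $u$ to be the characteristic-type function $u\equiv 0$ on $A$ and $u\equiv 1$ on $Z\setminus A$ (so $u^{-1}(0)=A$), and take $h$ to be the constant homotopy $h(x,s)=x$; conditions (1)–(4) in the definition of strong NDR pair hold immediately, and the final condition "$u(x)<1 \Rightarrow u(h(x,s))<1$" holds because $h$ is constant and $u<1$ exactly on $A$. Applying this to $(Z,A)=(\Delta(K)_n, S(\Delta(K)_n))$ gives properness, and applying it to each $(S^{t-1}(\Delta(K)_n), S^t(\Delta(K)_n))$ gives that $\Delta(K)$ is simplicially NDR. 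The final sentence, that Theorem \ref{thm:splittings} applies, is then immediate since "simplicially NDR" is exactly its hypothesis.

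The only mild subtlety — and the step I'd expect to be the main (modest) obstacle — is the bookkeeping needed to confirm that the $S^t(\Delta(K)_n)$ as defined via \emph{admissible} degeneracy words coincide with the naive "at least $t$ consecutive coincidences" description, i.e. that the unique-normal-form decomposition $x = s_{j_r}\cdots s_{j_1}(y)$ with $j_r>\cdots>j_1$ and $y$ nondegenerate behaves as expected here; this is a routine check using the simplicial identities and the explicit formula for $s_i$, which is precisely why the proposition is "left to the reader for verification." No genuinely hard analytic input is required — the discreteness of simplicial sets collapses everything to combinatorics.
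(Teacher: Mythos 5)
Your proof is correct: the paper gives no argument for this proposition (it is explicitly ``left to the reader for verification''), and your observation that each $\Delta(K)_n$ is a finite discrete space, so that every pair $(\Delta(K)_n, S(\Delta(K)_n))$ and $(S^{t-1}(\Delta(K)_n), S^t(\Delta(K)_n))$ is a strong NDR pair via $u=0$ on the subspace, $u=1$ off it, and the constant homotopy, is exactly the intended verification. The combinatorial bookkeeping you flag as the main obstacle (identifying $S^t(\Delta(K)_n)$ with tuples having at least $t$ consecutive coincidences) is in fact not needed for the NDR conditions, since \emph{any} subset of a discrete space yields a strong NDR pair by your argument.
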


Definitions and basic properties of real algebraic and
semi-algebraic varieties are listed next with main reference
\cite{bcr}.

\begin{defin}\label{defin:real.algebraic.set}
An affine real algebraic set is the the common zero set of a finite
number of real polynomials in $\R[x_1,...,x_k]$ for some $k\ge 1$.
A real algebraic set has a topology induced from $\R^k$ equipped
with the Euclidean topology which is called the
classical topology.\footnote{The Zariski
topology, for which closed sets are given by algebraic subsets, will
not be used here.}
A real semi-algebraic set is a subset of $\R^k$ for some $k$, which
is a finite union of sets each determined by a finite number of
polynomial inequalities.
\end{defin}

The following theorem was proven in \cite{bcr} section 9.4.1.
\begin{thm}\label{tri}
If $X$ be a compact semi-algebraic set then it is triangulable,
i.e. it is homeomorphic
to the geometric realization of a finite simplicial complex.
\end{thm}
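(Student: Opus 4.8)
The plan is to prove the stronger \emph{semi-algebraic triangulation theorem}: given a compact semi-algebraic set $X \subset \R^k$ together with a finite family $Y_1,\dots,Y_m$ of semi-algebraic subsets, there is a finite simplicial complex $K$ and a homeomorphism $h:|K| \to X$ which is semi-algebraic and carries each open simplex of $K$ onto a semi-algebraic subset of $X$ that is contained in, or disjoint from, each $Y_i$; the theorem as stated is the case $m=0$. The argument proceeds by induction on the dimension $k$ of the ambient space, the case $k \le 1$ being elementary, since a compact semi-algebraic subset of $\R$ is a finite union of points and closed intervals.

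For the inductive step, first perform a generic linear change of coordinates on $\R^k$ so that the projection $\pi:\R^k \to \R^{k-1}$ onto the first $k-1$ coordinates is a \emph{good direction} for $X$ and all the $Y_i$; this is possible because the set of bad directions is a proper algebraic subset of projective space. Then invoke a cylindrical algebraic decomposition of $\R^k$ adapted to $X$ and the $Y_i$: it produces a finite partition of $\R^{k-1}$ into semi-algebraic cells $C_\alpha$ and, over each $C_\alpha$, finitely many continuous semi-algebraic functions $\xi_{\alpha,1} < \cdots < \xi_{\alpha,\ell_\alpha}$ whose graphs and the open bands between consecutive graphs partition $\pi^{-1}(C_\alpha)$ into sets on which $X$ and each $Y_i$ are ``constant''. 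Apply the inductive hypothesis to the compact semi-algebraic set $\overline{\pi(X)}$, triangulating it compatibly with the closures of all the cells $C_\alpha$; this yields a finite complex $L$ and a semi-algebraic homeomorphism $g:|L| \to \overline{\pi(X)}$ with each open simplex of $L$ mapped into a single cell $C_\alpha$.

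Next, build the triangulation of $X$ over that of the base. Over each open simplex $\tau$ of $L$ the set $X \cap \pi^{-1}(g(\tau))$ is, by Hardt's semi-algebraic triviality theorem (in the refined form allowing compatibility with the finitely many subsets $Y_i$), semi-algebraically homeomorphic over $g(\tau)$ to a product $g(\tau) \times F_\tau$ with $F_\tau$ a fixed finite union of points and open intervals. One converts this into simplices by coning: using the values of the graph functions over the barycenter of $\tau$ as new vertices and forming iterated joins with the simplices of $\tau$ gives a simplicial decomposition of the closed ``block'' lying over $\bar\tau$. The essential point, and the main obstacle, is \emph{coherence}: the block decompositions over different simplices of $L$ must agree along common faces, so that they assemble into a single finite simplicial complex $K$ with $|K| \cong X$. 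This is arranged by performing the triviality step not simplex by simplex but over the whole stratification simultaneously, so that the trivialization over a simplex and the trivializations over each of its faces are compatible, and by carrying out the induction with the enriched family of subsets that records the block structure, so that the triangulation $g$ of $\overline{\pi(X)}$ already detects how the blocks degenerate over lower-dimensional strata. Finally one checks, using compactness together with the curve selection lemma, that the resulting bijection $|K| \to X$ — manifestly a semi-algebraic homeomorphism on each open simplex — is continuous across faces, hence a homeomorphism; specializing to $m=0$ yields the statement.
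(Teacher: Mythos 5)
The paper offers no proof of this statement at all---it is quoted from the cited reference \cite{bcr} (the semialgebraic triangulation theorem)---and your outline is essentially the argument given there: strengthen to triangulation compatible with finitely many semialgebraic subsets, induct on the ambient dimension, choose a generic ``good'' projection direction, take a cylindrical decomposition adapted to the data, triangulate the projection compatibly by the inductive hypothesis, and lift by coning over barycenters, with coherence over faces secured by enlarging the family of subsets fed into the induction. Two small corrections to your sketch: the appeal to Hardt's semialgebraic triviality is superfluous and best avoided (the cylindrical decomposition already exhibits the blocks over each cell as graphs and bands, i.e.\ as a product, and in the standard developments Hardt's theorem is itself deduced from triangulation, so invoking it here risks circularity), and continuity of the final bijection across faces is not a curve-selection argument but a consequence of the good-direction property, which makes the section functions $\xi_{\alpha,i}$ extend continuously to the closed cells, after which compactness of $|K|$ and the Hausdorff property of $X$ upgrade the continuous bijection to a homeomorphism.
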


Combining the fact that a compact semi-algebraic set is triangulable
with Theorem \ref{thm:splittings} above, the next corollary
follows at once.

\begin{cor}\label{cor:real.varieties.and.order.complex}
Let $X$ be a compact semi-algebraic set which is the geometric
realization of a finite simplicial complex $K$ with order complex
$\Delta(K)$. Then $\Delta(K)$ is a simplicial space for which the
$n$-space $\Delta(K)_n$ admits a decomposition after suspension
given in Theorem \ref{thm:splittings}.
\end{cor}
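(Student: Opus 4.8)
The plan is to assemble the ingredients already established and simply check that the hypotheses of Theorem \ref{thm:splittings} hold for the simplicial space $\Delta(K)$. By Theorem \ref{tri} the compact semi-algebraic set $X$ is triangulable, so there is a finite simplicial complex $K$ with $|K|$ homeomorphic to $X$; this is exactly the standing hypothesis of the corollary, so I take such a $K$ as given. I then form the order complex $\Delta(K)$ of Definition \ref{defin:moment.angle.complex}, whose $n$-simplices are the nondecreasing $(n+1)$-tuples of vertices of $K$ that span a simplex, and I regard $\Delta(K)$ as a simplicial space by equipping each finite set $\Delta(K)_n$ with the discrete topology (the face and degeneracy maps are then automatically continuous). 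By Theorem \ref{thm:homeomorphic.realization.of.order.complex} the geometric realization $|\Delta(K)|$ is homeomorphic to $|K|$, hence to $X$, which also identifies the filtration quotients appearing in Corollary \ref{cor:finer.splitting.quotients.realization} with those of the triangulated space $X$.

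The only point requiring verification is that $\Delta(K)$ meets the point-set hypotheses of Theorem \ref{thm:splittings}, namely that it is proper and simplicially NDR. This is precisely Proposition \ref{prop:order.complex}, which I would simply invoke. For a self-contained check one observes that every $\Delta(K)_n$ is discrete, so for any subset $A\subset\Delta(K)_n$ the pair $(\Delta(K)_n,A)$ is a strong NDR pair: take the representing map $u$ to be $0$ on $A$ and $1$ off $A$, together with the constant homotopy $h(x,s)=x$. Applying this with $A=S(\Delta(K)_n)$ gives the properness condition, and applying it with $A=S^{t}(\Delta(K)_n)\subset S^{t-1}(\Delta(K)_n)$ for each $t$ gives the simplicially NDR condition. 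Thus Lemma \ref{lem:wedges} and all of Theorem \ref{thm:splittings} apply.

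With the hypotheses in place, Theorem \ref{thm:splittings} applies verbatim to $X_{\bullet}=\Delta(K)$, yielding the natural homotopy equivalences $\Theta(n)$ and $H(n)$ and their filtration-preserving refinements for $\Sigma(\Delta(K)_n)$; combining with Proposition \ref{prop:quotients.in.realization} and Corollary \ref{cor:finer.splitting.quotients.realization} expresses the stable summands of $\Sigma^{n+1}(\Delta(K)_n)$ in terms of the quotients $F_{j}|\Delta(K)|/F_{j-1}|\Delta(K)|\cong F_{j}X/F_{j-1}X$ associated to the triangulation. The \emph{only} obstacle here is notational bookkeeping for the discrete spaces $\Delta(K)_n$; there is no analytic or geometric difficulty, since all of the substantive work has been carried out in Theorem \ref{thm:splittings} and Proposition \ref{prop:order.complex}.
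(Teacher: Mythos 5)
Your proposal is correct and follows essentially the same route as the paper: triangulability (Theorem \ref{tri}), the identification $|\Delta(K)|\cong|K|\cong X$, and Proposition \ref{prop:order.complex} to verify that $\Delta(K)$ is proper and simplicially NDR so that Theorem \ref{thm:splittings} applies. Your extra observation that each $\Delta(K)_n$ is finite discrete, so every pair is automatically a strong NDR pair, is exactly the verification the paper leaves to the reader in Proposition \ref{prop:order.complex}, and it is sound.
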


\section{Examples and Problems}\label{sec:Classical examples}

This section gives a list of examples of decompositions which arise
from the above method.

\begin{exm} \label{exm:spaces.of.homomorphisms}

Let $G$ denote a closed subgroup of $GL_r(\C)$
and consider the 
space of ordered commuting $n$--tuples in $G$, denoted 
$Hom(\oplus_n~\Z, G)$ and topologized as 
a subspace of the product $G^n$. 
These spaces can be assembled to form a simplicial space
which is proper and simplicially NDR, 
and thus 
Theorem \ref{thm:splittings} can be applied,
yielding in particular the decompositions described in \S 1
(see \cite{ac} and \cite{acg} for details).

\end{exm}

\begin{exm} \label{exm:spaces.of.representations}

The group $G$ acts on $Hom(\oplus_n~\Z, G)$ by conjugation with
orbit space denoted $Rep(\oplus_n~\Z, G)$. These spaces also form a
simplicial space satisfying the hypotheses of \ref{thm:splittings}
and thus admit analogous stable decompositions in case $G$ is
a compact Lie group (see \cite{acgo}).

\end{exm}

\begin{exm} \label{exm:moment.angle.complexes}

A product of spaces $X_1 \times X_2 \times \cdots X_n$ decomposes as
a wedge of suspensions after suspending once. These decompositions
induce related decompositions of moment-angle complexes, in special
cases, homotopy equivalent to various varieties obtained from
complements of coordinate planes in Euclidean space
(see \cite{bahri.bendersky.cohen.gitler}). 
The decompositions of simplicial spaces given here are more general than
the decompositions of generalized moment-angle complexes given in
\cite{bahri.bendersky.cohen.gitler}; however they are also 
coarser. 
\end{exm}

It seems useful to conclude by formulating some problems associated
to the topics in this paper:

\noindent\textbf{Problems:}
\begin{enumerate}

\item Give interesting examples 
for which Corollary \ref{cor:real.varieties.and.order.complex}
can be used to provide 
useful information concerning semi-algebraic sets. 

\item Identify useful conditions for which the splittings of
  Theorem \ref{thm:splittings} imply decompositions for the
  suspension of $|X_{\bullet}|$ for more general simplicial 
spaces $X_{\bullet}$.

\item  Identify useful conditions when $Tot_{\bullet}(X)$ can be stably split.

\end{enumerate}

\end{document}